\renewcommand{\to}{\mapsto}
\newtheorem{Thm}{Theorem}[section]		
\newtheorem{Cor}[Thm]{Corollary}
\newtheorem*{thm*}{Theorem}	
\theoremstyle{definition}
\newtheorem{Definition}[Thm]{Definition}
\theoremstyle{remark}
\newtheorem*{rmk}{Remark}
\newtheorem{Example}[Thm]{Example}
\newtheorem{ind}[]{{\rm\it Indice}}
\author[Schneider]{Robert Schneider}
\address{Department of Mathematics\newline
University of Georgia\newline
Athens, Georgia 30602}
\email{robert.schneider@uga.edu}
\author[Sellers]{James A. Sellers}
\address{Department of Mathematics and Statistics\newline
University of Minnesota Duluth\newline
Duluth, Minnesota 55812}
\email{jsellers@d.umn.edu}
\author[Wagner]{Ian Wagner}
\address{Department of Mathematics\newline
Vanderbilt University\newline
Nashville, Tennessee 37240}
\email{ian.c.wagner@vanderbilt.edu}
\title[Sequentially congruent partitions and partitions into squares]{Sequentially congruent partitions\\ and partitions into squares}
\begin{document}
\numberwithin{equation}{section}



\newtheorem{theorem}{Theorem}[section]
\newtheorem{proposition}[theorem]{Proposition}
\newtheorem{corollary}[theorem]{Corollary}
\newtheorem{definition}[theorem]{Definition}
\newtheorem{remark}[theorem]{Remark}
\newtheorem{example}[theorem]{Example}
\newtheorem{notation}[theorem]{Notation}

\def\qed{\hfill \rule{5pt}{7pt}}
\newcommand{\ndiv}{\hspace{-4pt}\not|\hspace{2pt}}

%
%
%
%
%
%
%
%
%

\begin{abstract}
In recent work, M. Schneider and the first author studied a curious class of integer partitions called ``sequentially congruent'' partitions: the $m$th part is congruent to the $(m+1)$th part modulo $m$, with the smallest part congruent to zero modulo the number of parts. 
Let $p_{\mathcal S}(n)$ be the number of sequentially congruent partitions of $n,$ and let $p_{\square}(n)$ be the number of partitions of $n$ wherein all parts are squares.  In this note we prove bijectively, for all $n\geq 1,$ that $p_{\mathcal S}(n) = p_{\square}(n).$ Our proof naturally extends to show other exotic classes of partitions of $n$ are in bijection with certain partitions of $n$ into $k$th powers. 
\vskip.1in
\noindent {\bf Keywords:} Number theory, combinatorics, partitions, sums of squares

\noindent {\bf Mathematics Subject Classification:}  11P83, 05A17, 11E25
\end{abstract}


\maketitle



 \section{Euler, Cauchy, and sequentially congruent partitions}
 
Let $\mathcal P$ denote the set of integer partitions
\footnote{See \cite{Andrews_theory} for further reading about integer partitions.}. Euler connected partitions to analysis 
with his generating function for the {partition function} $p(n)$, which gives the number of partitions of $n \geq 0$:
\begin{equation}\label{Euler}
\prod_{k=1}^{\infty}\frac{1}{1-q^k}=\prod_{k=1}^{\infty}\left(\sum_{j=0}^{\infty} q^{jk} \right) 
=\sum_{n=0}^{\infty}p(n)q^n,
\end{equation}
where $q\in \mathbb C, |q|<1,$ and we define $p(0):=1$. 
Another important classical result is the Cauchy product formula giving the product of two convergent power series:
\begin{equation}\label{Cauchy}
\left(\sum_{i=0}^{\infty} a_i q^i \right)\left(\sum_{j=0}^{\infty} b_j q^j \right)=\sum_{n=0}^{\infty}q^n\sum_{k=0}^{n}a_k b_{n-k}.
\end{equation}
This result extends to the product
$\prod_{k}\left(\sum_{j=0}^{\infty} a_{k,j} q^j \right)
$ of any number of power series (even infinitely many), with more complicated multiple sums comprising the resulting coefficients.

Now we consider what happens if we interpret the middle product of \eqref{Euler} in terms of \eqref{Cauchy}, that is, if we expand the product of geometric series via the Cauchy product formula, instead of collecting coefficients to count partitions of $n$ as in Euler's treatment. Since each geometric series $\sum_j q^{jk}=\sum_i a_{i,k} q^i$ has coefficient $a_{i,k}=1$ if $k$ divides $i$ and $a_{i,k}=0$ otherwise, this calculation yields 
\begin{flalign}\label{Cauchy2}
\prod_{k=1}^{\infty}\left(\sum_{j=0}^{\infty} q^{jk} \right) =\sum_{n=0}^{\infty}q^n \sum_{k_2=1}^{n}\sum_{k_3=1}^{k_2}\sum_{k_4=1}^{k_3}...\sum_{k_{n}=1}^{k_{n-1}}\{\text{either $0$ or $1$\}},
\end{flalign}
where the coefficient of $q^n$ on the right-hand side is an $(n-1)$-tuple sum whose summands enumerate some class of partitions induced by particular combinations of the indices of the multiple sum\footnote{See \cite{Schneider_zeta}, Cor. 2.9, for explicit treatment of a more general case of \eqref{Cauchy2}.}. 
But what {\it are} these seemingly-complicated partitions?
 
In fact, the right-hand side of \eqref{Cauchy2} enumerates certain {\it sequentially congruent partitions}
, a subset of $\mathcal P$ studied by M. Schneider and the first author in \cite{SS}.
 
 \begin{Definition} \label{SCdef}
A partition $\lambda = ( \lambda_{1}, \lambda_{2}, \dots, \lambda_{r})$, $\lambda_{1} \geq \lambda_{2} \geq \cdots \geq \lambda_{r} \geq 1$, is \textit{sequentially congruent} if: 
\begin{enumerate}
\item $\lambda_{i} \equiv \lambda_{i+1} \pmod{i}$ for $1 \leq i \leq r-1$; and
\item $\lambda_{r} \equiv 0 \pmod{r}$.
\end{enumerate}
We let $\mathcal{S} \subset \mathcal{P}$ denote the set of sequentially congruent partitions.
\end{Definition}

\begin{example}
The partition $(20,17,15,9,5)$ 
is sequentially congruent, because $20 \equiv 17\  (\operatorname{mod}\  1)$ trivially, $17\equiv 15\  (\operatorname{mod}\  2)$, $15\equiv 9\  (\operatorname{mod}\  3)$, $9\equiv 5\  (\operatorname{mod}\ 4)$, and finally $5\equiv 0\  (\operatorname{mod}\  5)$. 
\end{example}

With such strict congruence conditions imposed on the parts, sequentially congruent partitions seem to comprise a somewhat artificial subset of $\mathcal P$. However, in \cite{SS} 
they are found to fit nicely into 
partition theory, 
as the following theorem exemplifies. 

\begin{thm*}[Schneider--Schneider]
The set $\mathcal S$ enjoys a natural bijection with the set $\mathcal P$. Moreover, the number 
of sequentially congruent partitions with largest part $n$ is equal to $p(n)$.
\end{thm*}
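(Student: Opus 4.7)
The plan is to prove the theorem by exhibiting an explicit bijection $\Phi \colon \mathcal S \to \mathcal P$ that sends each sequentially congruent partition to an ordinary partition of its largest part; this single construction will deliver both assertions. The key observation is that the conditions in Definition \ref{SCdef} are precisely what is needed to make the consecutive differences $\lambda_i - \lambda_{i+1}$ divisible by $i$, and $\lambda_r$ divisible by $r$. For $\lambda = (\lambda_1,\ldots,\lambda_r) \in \mathcal S$, I would therefore define nonnegative integers $c_i := (\lambda_i - \lambda_{i+1})/i$ for $1 \leq i \leq r-1$ and $c_r := \lambda_r/r$. A telescoping computation then yields $\lambda_i = \sum_{j=i}^{r} j\,c_j$, so the tuple $(c_1,\ldots,c_r)$ reconstructs $\lambda$ uniquely.

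Next I would specialize this identity to $i = 1$, obtaining $\lambda_1 = \sum_{j=1}^{r} j\,c_j$. I read the right-hand side as an ordinary partition $\Phi(\lambda) := \nu$ of the integer $n := \lambda_1$ in which, for each $j$, the part $j$ appears with multiplicity $c_j$. Conversely, given any $\nu \in \mathcal P$ of $n$ with part-multiplicities $c_j$, I would set $r := \max\{j : c_j > 0\}$ and recover $\lambda$ by $\lambda_i := \sum_{j \geq i} j\,c_j$; the monotonicity $\lambda_i \geq \lambda_{i+1}$ is automatic from $c_j \geq 0$, and conditions (1) and (2) of Definition \ref{SCdef} hold by construction since $\lambda_i - \lambda_{i+1} = i\,c_i$ and $\lambda_r = r\,c_r$.

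By construction, $\Phi$ restricts to a bijection between sequentially congruent partitions with $\lambda_1 = n$ and ordinary partitions of $n$, which immediately gives the ``moreover'' assertion $\#\{\lambda \in \mathcal S : \lambda_1 = n\} = p(n)$. Disjointly unioning over $n \geq 1$ then produces the natural set-bijection $\mathcal S \longleftrightarrow \mathcal P$ that proves the first claim.

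The main obstacle, modest though it is, is spotting the correct change of variables $\lambda \mapsto (c_1,\ldots,c_r)$: once one interprets the divisibility conditions as license to scale consecutive differences by $i$ and re-reads $\sum j\,c_j$ as the frequency representation of a partition, the proof becomes essentially tautological. A minor subtlety worth verifying is that the assignment $r := \max\{j : c_j > 0\}$ in the inverse map matches the number of parts of the corresponding $\lambda$, which is automatic since $c_r = \lambda_r / r \geq 1$ whenever $\lambda_r \geq 1$.
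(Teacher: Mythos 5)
Your proof is correct. The change of variables $c_i := (\lambda_i-\lambda_{i+1})/i$ for $i<r$ and $c_r := \lambda_r/r$, together with the telescoping identity $\lambda_i=\sum_{j\geq i} j\,c_j$, is a genuine bijection $\mathcal S\to\mathcal P$ carrying the largest part of $\lambda$ to the size of its image, and your closing check that $c_r\geq 1$ (so the inverse map recovers the correct number of parts) is exactly the point that needs verifying. Note, though, that the present paper does not actually prove this theorem: it quotes it from \cite{SS} and only remarks that the bijection ``can be inferred by comparing coefficients'' of Euler's product \eqref{Euler} with its Cauchy-product expansion \eqref{Cauchy2}. Your map is precisely the combinatorial content of that coefficient comparison --- the multiplicities $c_j$ are the scaled successive differences that index the nested sums in \eqref{Cauchy2} --- and it coincides with one of the two explicit maps constructed in \cite{SS}; the other is your map composed with conjugation, which is essentially the route the present paper follows in Section 2 when it passes to frequency congruent partitions. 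What your write-up buys is a self-contained, purely combinatorial proof requiring no analytic input; what the paper's framing buys is the generating-function perspective that motivates the rest of the note.
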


This bijection can be inferred by comparing coefficients on the right-hand sides of \eqref{Euler} and \eqref{Cauchy2}. In \cite{SS}, {two} different bijective maps between $\mathcal S$ and $\mathcal P$ are given, producing two combinatorial proofs of the theorem. Surprisingly, composing either map with the inverse of the other map produces partition {\it conjugation}, i.e.,  the interchange of rows and columns in the Young diagram of $\lambda$. Of course, conjugation preserves partition {\it size} $|\lambda|:=\lambda_1+\lambda_2+\cdots +\lambda_r$, 
thus the aforementioned composite map takes the set of partitions 
of size $n$ to itself. 

Presented with these size-$n$ correspondences, an obvious question to ask now is: what is the number $p_{\mathcal S}(n)$ of sequentially congruent partitions of size $n$? Interestingly, a different bijection involving $\mathcal S$ gives a very satisfying answer.
%

\begin{Thm}\label{Thm1}
The set $\mathcal S$ enjoys a natural bijection with the set $\mathcal P_{
\square}$ of partitions whose parts are all perfect squares. Moreover, let  $p_{\square}(n)$ be the number of partitions of $n$ into squares. Then for all $n\geq 1,$ we have $$p_{\mathcal S}(n) = p_{\square}(n).$$ 
\end{Thm}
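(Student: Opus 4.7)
The plan is to exhibit an explicit size-preserving bijection $\Phi\colon \mathcal S \longrightarrow \mathcal P_\square$ by reparameterizing each sequentially congruent partition through its consecutive differences. Given $\lambda = (\lambda_1,\ldots,\lambda_r) \in \mathcal S$, the congruence $\lambda_i \equiv \lambda_{i+1}\pmod i$ together with $\lambda_i \geq \lambda_{i+1}$ lets one write $\lambda_i - \lambda_{i+1} = i\, c_i$ for unique integers $c_i \geq 0$, $1\le i\le r-1$, while $\lambda_r = r c_r$ with $c_r \geq 1$. Telescoping yields
\[
\lambda_i \;=\; \sum_{j=i}^{r} j\, c_j,
\]
and then summing over $i$ while swapping the order of summation (each term $j c_j$ is counted exactly $j$ times, once for every $i\le j$) produces the key identity
\[
|\lambda| \;=\; \sum_{i=1}^{r}\sum_{j=i}^{r} j\, c_j \;=\; \sum_{j=1}^{r} j^{\,2}\, c_j.
\]

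I would then define $\Phi(\lambda)$ to be the partition containing exactly $c_j$ copies of $j^{\,2}$ for each $1\le j\le r$. The identity above says this partition has size $|\lambda|$, so $\Phi(\lambda)\in \mathcal P_\square$, and the condition $c_r\geq 1$ records that $r^{\,2}$ is its largest part. To verify $\Phi$ is a bijection I would write down its inverse: given $\mu \in \mathcal P_\square$ of size $n$, let $c_j$ denote the multiplicity of $j^{\,2}$ in $\mu$, let $r$ be the largest index with $c_r>0$, and set $\lambda_i := \sum_{j=i}^{r} j c_j$ for $1\le i\le r$. Then $\lambda_i - \lambda_{i+1} = i c_i \ge 0$ and $\lambda_r = r c_r \ge r \ge 1$, so $\lambda = (\lambda_1,\ldots,\lambda_r) \in \mathcal S$; moreover $|\lambda| = n$ by the same double-sum identity. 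The two constructions are manifestly mutually inverse, yielding $p_{\mathcal S}(n) = p_\square(n)$.

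No real obstacle arises: the entire proof is driven by the summation-swap identity above. The single point that demands care is the endpoint condition $c_r \geq 1$, which must be preserved in order that the number of parts $r$ of $\lambda$ equal the side-length of the largest square appearing in $\Phi(\lambda)$; dropping it would destroy injectivity. It also seems likely that the same argument, with each $i$ replaced by $i^{k-1}$ in the congruence definition so that the differences $\lambda_i - \lambda_{i+1} = i^{k-1} c_i$ telescope to $|\lambda| = \sum_{j} j^{k} c_j$, gives the extension to partitions into $k$-th powers promised in the abstract.
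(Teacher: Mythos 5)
Your proof is correct and produces exactly the bijection the paper constructs: your coefficients $c_j=(\lambda_j-\lambda_{j+1})/j$ (with $\lambda_{r+1}:=0$) coincide with the paper's exponents $e_j$, which it obtains by first conjugating $\lambda$ to a frequency congruent partition $(1^{1\cdot e_1}\,2^{2\cdot e_2}\cdots)$ and then sending $i^{i\cdot e_i}\mapsto (i^2)^{e_i}$. The only difference is presentational --- you verify size-preservation via the telescoping identity $\sum_{i}\sum_{j\ge i} j\,c_j=\sum_j j^2 c_j$ rather than by appealing to conjugation plus the identity $i\cdot(i e_i)=e_i\cdot i^2$, and your closing remark sketches a slightly different (in fact more inclusive) generalization than the paper's Theorem 2.2, which fixes the multiplier $j$ rather than letting it vary with $i$.
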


Historically, sums of squares are of deep interest in arithmetic. The clay tablet known as Plimpton 322 contains lists of Pythagorean triples dating back to ancient Babylon \cite{Robson}, and there are many examples in the partitions literature of connections to sums of squares (see e.g. \cite{Bell, HS, HS2, Wright}). It is interesting to see the seemingly-artificial set $\mathcal S$ fitting nicely, once again, into the theory of numbers\footnote{We note then that the case $k=2, M=1$ of Wright \cite{Wright}, Thm. 2, gives the asymptotic estimate $p_{\mathcal S}(n) \sim \frac{B e^{C{n^{1/3}}}}{n^{7/6}}$ with constants $B:=\left(\frac{\zeta(3/2)^{4}}{442368\pi^{7}}\right)^{1/6}, C:=\frac{3}{2}\left(\frac{{\pi}}{2}  \zeta(3/2)^2\right)^{1/3}$, where $\zeta$ is the Riemann zeta function.}. 

Combining Theorem \ref{Thm1} with the aforementioned theorem from \cite{SS} allows one to define a set of partitions into squares which is equinumerous with the number of partitions of $n$.  Let
\begin{equation*}
\lambda = \left( (1^2)^{e_{1}} \ (2^2)^{e_{2}} \ (3^2)^{e_{3}} \dots (i^2)^{e_{i}} \dots \right) \in \mathcal{P}_{\square}
\end{equation*}
be a partition into squares given in frequency notation (as defined in \eqref{freq} below), and let us define the following ``weighted frequency'' statistic $\mathcal L$ on partitions into squares:
\begin{equation} \label{L}
\mathcal{L} (\lambda) = \sum_{i \geq 1} i \cdot e_{i}.
\end{equation}
\begin{Cor} \label{Cor}
The set $\mathcal{P}_{\square}$ enjoys a natural bijection with the set $\mathcal{P}$.  Moreover, the number of partitions $\lambda$ whose parts are all perfect squares with $\mathcal{L}(\lambda)=n$ is equal to $p(n)$.
\end{Cor}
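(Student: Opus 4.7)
The plan is to deduce both claims of Corollary \ref{Cor} by composing the bijection $\Phi\colon\mathcal{S}\to\mathcal{P}_{\square}$ furnished by Theorem \ref{Thm1} with the Schneider--Schneider bijection $\Psi\colon\mathcal{S}\to\mathcal{P}$ recalled in the introduction. The composite $\Psi\circ\Phi^{-1}\colon\mathcal{P}_{\square}\to\mathcal{P}$ is automatically a bijection, which yields the first sentence of the corollary for free; no further work is needed beyond invoking the two earlier results.

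For the refined statistic claim I would show that $\Phi$ intertwines $\mathcal{L}$ on $\mathcal{P}_{\square}$ with the largest-part statistic on $\mathcal{S}$. The key observation is that a sequentially congruent partition $(\lambda_1,\ldots,\lambda_r)$ is completely determined by the nonnegative data $d_i:=(\lambda_i-\lambda_{i+1})/i$ for $1\le i\le r-1$ together with $d_r:=\lambda_r/r\ge 1$, and a short telescoping gives $|\lambda|=\sum_{i=1}^r i^2 d_i$. Hence the assignment $\lambda \mapsto \bigl((1^2)^{d_1}(2^2)^{d_2}\cdots(r^2)^{d_r}\bigr)$ is itself a natural bijection $\mathcal{S}\to\mathcal{P}_{\square}$, which must coincide with the map $\Phi$ produced in Theorem \ref{Thm1} (or differ from it only by a harmless relabelling). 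The same telescoping then delivers
\[
\lambda_1 \;=\; \lambda_r + \sum_{i=1}^{r-1}(\lambda_i-\lambda_{i+1}) \;=\; \sum_{i=1}^{r} i\, d_i \;=\; \mathcal{L}(\Phi(\lambda)),
\]
identifying the largest part of $\lambda$ with the weighted frequency $\mathcal{L}$ of its image partition into squares.

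With this identification in hand, the Schneider--Schneider theorem---which asserts that the number of sequentially congruent partitions with largest part $n$ equals $p(n)$---transfers immediately to $\mathcal{P}_{\square}$ and gives
\[
\#\{\mu\in\mathcal{P}_{\square}:\mathcal{L}(\mu)=n\} \;=\; \#\{\lambda\in\mathcal{S}:\lambda_1=n\} \;=\; p(n),
\]
finishing the corollary.

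The only delicate point I anticipate is verifying that the explicit bijection $\lambda\mapsto((i^2)^{d_i})$ sketched above is genuinely the $\Phi$ constructed in the proof of Theorem \ref{Thm1}. If the paper builds $\Phi$ by a different route, a brief compatibility check will be required; however, because the encoding by the differences $d_i$ is essentially forced by the congruence conditions defining $\mathcal{S}$, I expect the identity $\lambda_1=\mathcal{L}(\Phi(\lambda))$ to fall out with no genuine obstacle, and the rest of the proof to reduce to bookkeeping.
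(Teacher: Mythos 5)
Your proposal is correct and takes essentially the same route as the paper: the paper also reduces the corollary to the Schneider--Schneider largest-part theorem by observing that $\mathcal{L}(\pi(\lambda^*))$ equals the number of parts of the frequency congruent partition $\lambda^*$, i.e.\ the largest part of $\lambda\in\mathcal{S}$. The compatibility check you flag is immediate, since the frequency of $i$ in the conjugate of $\lambda$ is exactly $\lambda_i-\lambda_{i+1}=i\,d_i$, so your explicit map $\lambda\mapsto((i^2)^{d_i})$ coincides with the paper's composite of conjugation and $\pi$.
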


%
%
 
\section{Proofs, example, and extension of our result}

We present a bijective proof of Theorem \ref{Thm1}.
Rather than proving the theorem directly from Young diagrams or other combinatorial devices (which seems possible), 
we proceed 
by considering the set of {\it conjugates} of the partitions in $\mathcal S$, that is, partitions formed by conjugation of the Young diagrams of sequentially congruent partitions. These conjugate partitions are also studied in \cite{SS}, where they are called {\it frequency congruent partitions}.

Let $m_i=m_i(\lambda)\geq 0$ denote the {\it frequency} (or ``multiplicity'') of $i\in \mathbb N$, the number of repetitions of $i$ as a part of partition $\lambda$. 

\begin{Definition}
A partition $\lambda$ is {\it frequency congruent} if for all $i\geq 1$, 
$$m_i(\lambda)\equiv 0\  (\text{mod}\  i).$$
We let $\mathcal{F} \subset \mathcal{P}$ denote the set of frequency congruent partitions.
\end{Definition}

We also recall the alternative ``frequency notation'' representation of a partition:
\begin{equation} \label{freq}
\lambda =(1^{m_1}\  2^{m_2}\   3^{m_3} \dots \ i^{m_i}\dots),\end{equation} 
where only finitely many of the frequencies $m_i$ are nonzero.

\begin{proof}[Proof of Theorem \ref{Thm1} and Corollary \ref{Cor}]
We begin by observing that the sets $\mathcal S$ and $\mathcal F$ are in bijection through conjugation, which preserves size and interchanges the largest part and number of parts.  Therefore, if $p_{\mathcal F}(n)$ denotes the number of frequency congruent partitions of $n$, we have
\begin{equation}\label{Fbij}
p_{\mathcal S}(n)=p_{\mathcal F}(n),
\end{equation}
and if $\lambda \in \mathcal{S}$ has largest part equal to $k$, then the conjugate $\lambda^* \in \mathcal{F}$ has $k$ total parts.

Let $\lambda$ be a partition of size $n$ which is sequentially congruent.  Then the conjugate $\lambda^*$ of $\lambda$ is {frequency congruent}, thus each part divides its own frequency: 
\begin{equation}
\lambda^* = (1^{1\cdot e_1}\  2^{2\cdot e_2}  \dots\  i^{i\cdot e_i}\ \dots)
\end{equation}
for some $e_i\geq 0$.  Now, let us define a map $\pi:\mathcal F \to \mathcal P_{\square}$ taking $\lambda^*\in \mathcal F$ to the partition 
\begin{equation}\label{pi}
\pi(\lambda^*)=((1^2)^{e_1}\  (2^2)^{e_2}  \dots\  (i^2)^{e_i}  \dots)\in \mathcal P_{\square},
\end{equation}
noting that $|\lambda^*|=|\pi(\lambda^*)|=n$ since for each $i\geq 1$, the sum of corresponding parts is preserved:
\begin{equation}\label{map}
\underbrace{i+i+\cdots+i}_{i\cdot e_i \ times}=i\cdot (\underbrace{i+i+\cdots+i}_{e_i \ times})= \underbrace{i^2+i^2+\cdots+i^2}_{e_i \ times}.\end{equation}
The map is clearly reversible, so $\pi$ is a bijection. Therefore, we have
\begin{equation}\label{Fbij2}
p_{\mathcal F}(n)=p_{\square}(n).
\end{equation}
Comparing \eqref{Fbij} and \eqref{Fbij2} completes the proof of Theorem \ref{Thm1}.  

The proof of Corollary \ref{Cor} follows by noting that $\mathcal{L}(\pi(\lambda^*))$ is the number of parts (or ``length'') of $\lambda^*$.  The statistic $\mathcal{L}(\pi(\lambda^*))$ can be interpreted as a weighted sum of the number of squares of each size that appear in the Young diagram of the partition $\lambda^*$, as illustrated in Example \ref{ex2} below.
\end{proof}

 \begin{rmk}
 One could prove Theorem \ref{Thm1} analytically using the generating function proof of Corollary 4.3 in \cite{SS}, noting the left-hand product can be interpreted as generating either frequency congruent partitions or partitions into squares.
   \end{rmk}

%
%
%
%
%

%
%

Here we give a concrete example of the bijection proved above.

%
%
\begin{Example} \label{ex2}
Begin with the frequency congruent partition 
\begin{equation}
\phi = (1^7\   2^6\  4^8\  5^5)\in \mathcal F.\end{equation}
By conjugation, $\phi$ is mapped to the sequentially congruent partition
\begin{equation}(26, 19, 13, 13, 5)\in \mathcal S.\end{equation}
Moreover, using the ``squaring'' map $\pi$ defined in \eqref{pi}, $\phi$ is mapped to the partition 
\begin{equation}(1^7\  4^3\  16^2\  25^1)\in \mathcal P_{\square}.\end{equation} 
When we compose these two maps (and ignore the intermediary frequency congruent partition $\phi$ with which we began), we see the bijection
 \begin{equation}(26, 19, 13, 13, 5)\longleftrightarrow (1^7\  4^3\  16^2\  25^1). \end{equation}
This bijection is visually evident in the Young diagram of the sequentially congruent partition, which we shade to highlight that it breaks down into a concatenation of perfect squares:

\begin{flalign}
\begin{Young}
&     & &  & &   $ \textcolor{gray}{\blacksquare}$    & $ \textcolor{gray}{\blacksquare}$ &  $ \textcolor{gray}{\blacksquare}$  &$ \textcolor{gray}{\blacksquare}$  &      & &  &&  $ \textcolor{gray}{\blacksquare}$    &$ \textcolor{gray}{\blacksquare}$  &   &  &    $ \textcolor{gray}{\blacksquare}$    &  $ \textcolor{gray}{\blacksquare}$ &    &$ \textcolor{gray}{\blacksquare}$  &     &$ \textcolor{gray}{\blacksquare}$  &  & $ \textcolor{gray}{\blacksquare}$  & \cr
&     & &  & &     $ \textcolor{gray}{\blacksquare}$    & $ \textcolor{gray}{\blacksquare}$ &  $ \textcolor{gray}{\blacksquare}$  &$ \textcolor{gray}{\blacksquare}$  &       & &  &&   $ \textcolor{gray}{\blacksquare}$   &$ \textcolor{gray}{\blacksquare}$  &   &  &    $ \textcolor{gray}{\blacksquare}$    & $ \textcolor{gray}{\blacksquare}$  \cr
&     & &  & &      $ \textcolor{gray}{\blacksquare}$    & $ \textcolor{gray}{\blacksquare}$ &  $ \textcolor{gray}{\blacksquare}$  &$ \textcolor{gray}{\blacksquare}$  &         & & & \cr
&     & &  & &     $ \textcolor{gray}{\blacksquare}$    & $ \textcolor{gray}{\blacksquare}$ &  $ \textcolor{gray}{\blacksquare}$  &$ \textcolor{gray}{\blacksquare}$  &     & & & \cr
&  &  &  & \cr
\end{Young}
\end{flalign}
\begin{remark}Conversely, any collection of perfect squares may be concatenated in similar fashion (top edges aligned, weakly decreasing areas from left to right) to produce the Young diagram of a sequentially congruent partition. \end{remark} 
\end{Example}

We now define a certain refinement of sequentially congruent partitions amenable to a combinatorial proof like the proof of Theorem \ref{Thm1} above, to show how similar methods yield other bijections of a similar flavor. 

\begin{Definition}
Let $\mathcal{S}(j,k)\subset \mathcal P$ be the subset of partitions $\lambda = ( \lambda_{1}, \lambda_{2}, \dots, \lambda_{r})$ satisfying:
\begin{enumerate}
\item $\lambda_{i} - \lambda_{i+1} = j \cdot i^{k}$ for $1 \leq i \leq r-1$; and
\item $\lambda_{r} = j \cdot r^{k}$.
\end{enumerate}
\end{Definition}

\begin{Thm} \label{2}
The partitions $\lambda \in \mathcal{S}(j,k)$ of size $n$ are in bijection with partitions of $n$ into $(k +1)$th powers where each part occurs exactly $j$ times.
\end{Thm}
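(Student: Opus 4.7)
The plan is to mimic, step by step, the bijective proof of Theorem \ref{Thm1}. First I would pin down the structure of $\mathcal{S}(j,k)$: conditions (1) and (2) of the Definition recursively determine every part, giving $\lambda_r = jr^k$ and $\lambda_i = j\sum_{m=i}^{r} m^k$, so $|\lambda| = j\sum_{m=1}^{r} m^{k+1}$. Thus each $\lambda \in \mathcal{S}(j,k)$ is parameterized by its length $r$, and a size-$n$ element exists exactly when $n = j\sum_{m=1}^r m^{k+1}$ for some $r \geq 1$.

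Next I would pass to the conjugate $\lambda^*$, exactly as in the proof of Theorem \ref{Thm1}. The standard fact that the multiplicity of $m$ as a part of $\lambda^*$ equals $\lambda_m - \lambda_{m+1}$ (with the convention $\lambda_{r+1} := 0$), combined with conditions (1) and (2), immediately produces
\begin{equation*}
\lambda^* = \left(1^{j \cdot 1^{k}}\  2^{j \cdot 2^{k}}\  3^{j \cdot 3^{k}}\  \dots\  r^{j \cdot r^{k}}\right)
\end{equation*}
in frequency notation, a natural $(j,k)$-refinement of the frequency congruent partitions that appeared in the proof of Theorem \ref{Thm1}.

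Now introduce the analogue $\pi_{j,k}$ of the map $\pi$ from \eqref{pi}: send a part $m$ with multiplicity $jm^k$ to a part $m^{k+1}$ with multiplicity $j$. Size preservation is the identity $m \cdot (jm^k) = j \cdot m^{k+1}$, an exact analogue of \eqref{map}, and the map is manifestly reversible. Applied to the $\lambda^*$ above, this returns
\begin{equation*}
\pi_{j,k}(\lambda^*) = \left((1^{k+1})^{j}\  (2^{k+1})^{j}\  \dots\  (r^{k+1})^{j}\right),
\end{equation*}
a partition of $n$ into $(k+1)$th powers in which every part present appears exactly $j$ times. Composing conjugation with $\pi_{j,k}$ then yields the desired bijection.

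I expect the only real subtlety to lie in matching the image of the composite map with the target set of Theorem \ref{2}: the image consists only of partitions whose distinct bases form an initial segment $\{1,2,\dots,r\}$, so one must verify that this coincides with the intended right-hand class (equivalently, that applying the inverse of $\pi_{j,k}$ followed by unconjugation to any such target partition produces an element of $\mathcal{S}(j,k)$). Beyond that bookkeeping step, the argument runs mechanically parallel to the proof of Theorem \ref{Thm1}.
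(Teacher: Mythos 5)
Your proposal follows the paper's proof essentially verbatim: pass to the conjugate $\lambda^*$, whose part $i$ has multiplicity $j\cdot i^k$, then apply the size-preserving, reversible map $\pi_{j,k}$ sending $i^{\,j\cdot i^k}$ to $(i^{k+1})^{j}$, exactly as the paper does. The one place you go slightly further is in observing that the exact equalities defining $\mathcal{S}(j,k)$ force the parts of $\lambda^*$ to be precisely $\{1,\dots,r\}$, so the image consists only of partitions using the first $r$ consecutive $(k+1)$th powers each exactly $j$ times --- a mismatch with the literal target class that the paper's proof (which writes $m_i(\lambda^*)=j\cdot i^k\epsilon_i$ with $\epsilon_i\in\{0,1\}$) passes over silently, so the ``bookkeeping step'' you flag is a fair observation about the theorem's statement rather than a defect of your argument.
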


\begin{proof}[Proof of Theorem \ref{2}]
A very similar map to $\pi$, which we will denote by $\pi_{j,k}$, can be employed to give this result.  Notice that the conjugate $\lambda^*$ of a partition $\lambda\in \mathcal{S}(j,k)$ is a partition where $m_{i}(\lambda^{*}) = j \cdot i^{k}\epsilon_i$, with $\epsilon_i=\epsilon_i(\lambda^*)=1$ if $i$ is a part of $\lambda^*$ and $\epsilon_i=0$ otherwise.  Then we define
\begin{align}
\pi_{j,k}(\lambda^{*}) &= \pi_{j,k}((1^{j\cdot 1^k\epsilon_1}\  2^{j \cdot 2^k\epsilon_2}\  3^{j \cdot 3^k\epsilon_3}  \dots\  i^{j\cdot i^k\epsilon_i} \dots))\\& =\nonumber ((1^{k+1})^{j\epsilon_1}\  (2^{k+1})^{j\epsilon_2}\  (3^{k+1})^{j\epsilon_3}  \dots\  (i^{k+1})^{j\epsilon_i} \dots).
\end{align}
Noting that the map $\pi_{j,k}$ is both size-preserving and reversible, much like the map $\pi$, completes the proof of the bijection.
\end{proof}

It seems worthy of further investigation, to apply partition-theoretic maps like those presented here to study sums of squares and higher powers. 

\subsection*{Acknowledgments} The authors are grateful to Maxwell Schneider for conversations that informed our work, and to the anonymous referee for suggestions that strengthened this paper.

\end{document}